 \newtheorem{thm}{Theorem}[section]
 \newtheorem{cor}[thm]{Corollary}
 \theoremstyle{definition}
 \newtheorem{defn}[thm]{Definition}
 \theoremstyle{remark}
 \numberwithin{equation}{section}
\begin{document}

%
%
%
%
%
%
%
%
%

\title{$k$-connected degree sequences}

\author{Jonathan McLaughlin}

\address{%
Department of Mathematics, \\ 
St. Patrick's College,\\
 Dublin City University,\\ 
 Dublin 9, \\ 
 Ireland }

\email{jonny$\_\;$mclaughlin@hotmail.com}


\subjclass{Primary 05C40}

\keywords{$k$-connected graph, degree sequence}

\date{\today}

\begin{abstract} Necessary and sufficient conditions for a sequence of positive integers to be the degree sequence of a $k$-connected simple graph are detailed. Conditions are also given under which such a sequence is necessarily $k$-connected. 
\end{abstract}

\maketitle

\section{Introduction}
\parindent=0cm

Necessary and sufficient conditions for a sequence of non-negative integers to be connected i.e. the degree sequence of some finite simple connected graph, are implicit in Hakimi \cite{Hk} and have been stated explicitly by the author in \cite{Me15} and expounded on in  \cite{Me1c}. This note builds upon these conditions of Hakimi and details necessary and sufficient conditions for a sequence of non-negative integers to be $k$-connected. The note concludes with necessary and sufficient conditions for a sequence of non-negative integers to be necessarily $k$-connected i.e. the sequence can only be realised as a $k$-connected graph.

\section{Preliminaries }

Let $G=(V_{G},E_{G})$ be a graph where $V_{G}$ denotes the vertex set of $G$ and $E_{G}\subseteq [V_{G}]^{2}$ denotes the edge set of $G$ (given that $[V_G]^2$ is the set of all $2$-element subsets of $V_G$).  An edge $\{a,b\}$ is denoted $ab$. A graph is finite when $|V_{G}|<\infty$ and $|E_{G}|<\infty$, where $|X|$ denotes the cardinality of the set $X$. The union of graphs $G$ and $H$ is the graph $G\cup H=(V_{G}\cup V_{H}, E_{G}\cup E_{H})$ and $G\cup ab$ is understood to be the graph $(V_G, E_G)\cup (\{a,b\},\{ab\})$. A graph is simple if it contains no loops (i.e. $aa\not\in E_{G}$) or parallel/multiple edges (i.e. $\{ab,ab\}\not\subseteq E_{G}$). The {\it degree} of a vertex $v$ in a graph $G$, denoted $deg(v)$, is the number of edges in $G$ which contain $v$. A graph where all vertices have degree $k$ is called a {\it $k$-regular} graph. A {\it path} is a graph with $n$ vertices in which two vertices, known as the {\it endpoints}, have degree $1$ and $n-2$ vertices have degree $2$. A graph is {\it connected} if there exists at least one path between every pair of vertices in the graph. Paths $P_1$ and $P_2$, both with endpoints $a$ and $b$, are {\it internally disjoint} if $P_1\cap P_2=(\{a,b\},\{\})$. A graph $G$ is $k${\it -connected} when there exists at least $k$ internally disjoint paths in $G$ between any two vertices in $G$. This characterisation of a graph being $k$-connected is synonymous with Menger's Theorem. $K_{n}$ denotes the {\it complete graph} on $n$ vertices. The {\it complement} of a simple graph $G$ is the simple graph $\overline{G}$ with vertex set $V_G$ and edge set the pairs of vertices in $V_G$ which are not contained in $E_G$. All basic graph theory definitions can be found in standard texts such as \cite{BM}, \cite{D} or \cite{GG}.

\section{Degree sequences }\label{s5}

A finite sequence $s=\{s_1,...,s_n\}$ of non-negative integers is called {\it graphic} if there exists a finite simple graph with vertex set  $\{v_1,..., v_n\}$ such that $v_i$ has degree $s_i$ for all $i=1,...,n$. Given a graph $G$ then the degree sequence $d(G)$ is the monotonic non-increasing sequence of degrees of the vertices in $V_G$. This means that every graphical sequence $s$ is equal to the degree sequence $d(G)$ of some graph $G$ (subject to possible rearrangement of the terms in $s$).

\begin{defn} A finite sequence $s=\{s_1,...,s_n\}$ of positive integers is called {\it $k$-connected} if there exists a finite simple $k$-connected graph with vertex set  $\{v_1,..., v_n\}$ such that $deg(v_i)= s_i$ for all $i=1,...,n$.
\end{defn}
A finite sequence $s=\{s_1,...,s_n\}$ of positive integers is called {\it necessarily $k$-connected} if $s$ can only be realisable as a simple $k$-connected graph. \\

Given a sequence of positive integers $s=\{s_1,...,s_n\}$ then define the {\it associated pair of $s$}, denoted $(\varphi(s),\epsilon(s))$, to be the pair $(n, \frac{1}{2}\sum\limits_{i=1}^{n}s_i)$. Where no ambiguity can arise,  $(\varphi(s),\epsilon(s))$ is simply denoted $(\varphi,\epsilon)$.

\section{Results}

\begin{thm}\label{Main} Given a sequence $s=\{s_1,...,s_n\}$ of positive integers, with the associated pair $(\varphi, \epsilon)$, such that $s_i\geq s_{i+1}$ for $i=1,...,n-1$ then $s$ is $k$-connected if and only if 
\begin{itemize}
\item $\epsilon \in \mathbb{N}$,
\item $s_1\leq \varphi-1$ and $s_n\geq k$,
\item $\frac{k\varphi}{2} \leq \epsilon \leq {{\varphi \choose 2}}$. 
\end{itemize}
\end{thm}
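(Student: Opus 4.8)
The plan is to prove the equivalence by establishing necessity of the three bullet conditions first, then proving sufficiency by explicit construction of a $k$-connected realisation. Necessity is largely bookkeeping: if $G$ is a simple $k$-connected graph realising $s$, then $\epsilon = |E_G| \in \mathbb{N}$ is immediate; $s_1 \leq \varphi - 1$ holds because a vertex in a simple graph on $\varphi = n$ vertices has degree at most $n-1$; and $s_n \geq k$ follows from the standard fact that $k$-connectivity forces minimum degree at least $k$ (removing the fewer than $k$ neighbours of a minimum-degree vertex, or invoking Menger directly, would otherwise disconnect the graph). The lower bound $\frac{k\varphi}{2} \leq \epsilon$ is then just the handshake consequence of $\delta(G) \geq k$, namely $2\epsilon = \sum_i s_i \geq kn$, while $\epsilon \leq \binom{\varphi}{2}$ is the edge bound for a simple graph. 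So the necessity direction should go through cleanly.

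The substantive direction is sufficiency. First I would verify that the stated conditions guarantee $s$ is graphic at all; since $\epsilon \in \mathbb{N}$, $s_1 \leq n-1$, and $\frac{kn}{2} \leq \epsilon \leq \binom{n}{2}$, I would argue (leaning on the Hakimi/Erd\H{o}s--Gallai framework cited in the introduction) that such a sequence is realisable as a simple graph. The harder task is to produce a realisation that is actually $k$-connected. The natural strategy is to start from \emph{some} realisation $G$ of $s$ with minimum degree $\delta(G) = s_n \geq k$ and then show that edge-switching operations (the standard ``$2$-switch'': replace edges $ab, cd$ by $ad, cb$) can be used to increase the connectivity to at least $k$ without changing the degree sequence. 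Since all realisations of a graphic sequence are connected by a sequence of $2$-switches, the real content is to show that among these realisations at least one is $k$-connected, for instance by arguing that a realisation maximising some connectivity-related parameter (or chosen to be ``as regular/balanced as possible'') must have $\kappa(G) \geq \delta(G) \geq k$.

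I expect the main obstacle to be precisely this gap between minimum degree and connectivity: $\delta(G) \geq k$ does \emph{not} imply $\kappa(G) \geq k$ in general, so one cannot simply invoke the minimum-degree bound. The proof must exploit the freedom in the degree sequence — the conditions leave many edges available since $\epsilon$ can range up to $\binom{n}{2}$ — to repair any separating set of size less than $k$. Concretely, if a candidate realisation $G$ has a vertex cut $S$ with $|S| < k$, I would take a vertex $u$ in one component and a vertex $w$ in another of $G - S$ and perform a $2$-switch that creates an edge bridging the two components while preserving all degrees; a potential-function or extremal argument (minimising the number of components of $G-S$ over all cuts of minimum size, or maximising the number of vertex-disjoint paths between a fixed worst pair) would show this process terminates in a $k$-connected graph. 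Care is needed to ensure the switch stays within a simple graph and does not destroy the degree sequence, and to handle the boundary/equality cases (e.g. when $\epsilon = \frac{k\varphi}{2}$ forces a $k$-regular realisation, or when $s_1 = \varphi - 1$).

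The cleanest write-up would isolate the switching/repair argument as the core lemma, prove termination via a strictly monotone integer-valued potential, and then assemble necessity and sufficiency into the stated biconditional, treating the extremal edge-count cases separately to confirm the displayed inequalities are tight and achievable.
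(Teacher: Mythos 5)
Your necessity direction is fine and coincides with the paper's. The sufficiency plan, however, breaks at both of its steps, and not for reasons that more care could repair. Its first step --- ``verify that the stated conditions guarantee $s$ is graphic'' --- is simply false: take $k=1$ and $s=\{3,3,1,1\}$. Then $\epsilon=4\in\mathbb{N}$, $s_1=3=\varphi-1$, $s_n=1\geq k$, and $\frac{k\varphi}{2}=2\leq 4\leq 6=\binom{\varphi}{2}$, yet $s$ is not graphic (each vertex of degree $3$ must be adjacent to both vertices of degree $1$, forcing those to have degree at least $2$). The bulleted conditions are far weaker than the Erd\H{o}s--Gallai inequalities, so no appeal to that framework can close this step. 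Your second step fails independently of the first: for $k=2$ the sequence $s=\{4,2,2,2,2\}$ \emph{is} graphic and satisfies every bullet ($\epsilon=6$, $s_1=4=\varphi-1$, $s_n=2$, $5\leq 6\leq 10$), but its only realisation up to isomorphism is the ``bowtie'' (two triangles sharing a vertex: the degree-$4$ vertex is adjacent to all others, and the two remaining edges form a matching), which has a cut vertex. Since every realisation is this graph, no sequence of $2$-switches can ever reach a $2$-connected realisation, and no potential-function argument can make the repair process terminate in success. In short, the theorem as stated is false, and your proposal founders exactly where it engages honestly with the actual quantifier (``\emph{every} $s$ satisfying the conditions is $k$-connected'').

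It is instructive to see how the paper's own proof avoids, without resolving, this problem: its sufficiency argument never touches an arbitrary $s$. It starts from a $k$-regular $k$-connected graph realising $\{k,\dots,k\}$ (or $\{k+1,k,\dots,k\}$ in the odd-odd case) and adds one edge at a time, thereby exhibiting, for each admissible value of $\epsilon$, a \emph{single specific} sequence with that associated pair which is $k$-connected. That establishes only the much weaker statement ``for every admissible pair $(\varphi,\epsilon)$ there exists some $k$-connected sequence with that pair''; the universal quantification over sequences is silently dropped. Your strategy --- realise $s$ somehow, then repair connectivity by degree-preserving $2$-switches --- is in fact the standard and correct line of attack for theorems of this type (it is essentially how Wang and Kleitman characterise degree sequences of $k$-connected graphs), and the two counterexamples above identify precisely the hypotheses such a characterisation must add (genuine graphicality conditions, plus more than a minimum-degree bound). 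So the gap here is not an oversight in your write-up: it is a defect of the statement itself, which your approach exposes and the paper's approach conceals.
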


\begin{proof} ($\Rightarrow$) Clearly $\epsilon \in \mathbb{N}$ is a necessary condition for any sequence $s$ to be realisable as half the sum of the degrees in any graph is the number of edges in that graph which must be a natural number. The necessity of the condition $s_1\leq \varphi-1$ follows from the definition of a simple graph and the need for $s_n\geq k$ is due to the fact that every vertex in a $k$-connected graph has degree at least $k$. \\

The necessity of the condition $ \epsilon \geq \frac{k\varphi}{2} $ follows from the observation that the minimum possible $\epsilon$ of any $k$-connected sequence is $\epsilon=\frac{k\varphi}{2}$ which is contained in the associated pair of $s=\underbrace{\{k,...,k\}}_{\varphi}$, the degree sequence of a $k$-regular graph. Note that $\epsilon$ must be even, by definition, and this occurs whenever $\varphi$ and $k$ are not {\it both} odd (as there cannot exist a $k$-regular graph, where $k$ is odd and the number of vertices is also odd). If both $\varphi$ and $k$ are odd, then the minimum possible $\epsilon$ of any $k$-connected sequence is $\epsilon=\frac{k\varphi+1}{2}$ which is contained in the associated pair of $s=\underbrace{\{k+1,k,...,k\}}_{\varphi}$. To show the necessity of the condition $\epsilon \leq {{\varphi \choose 2}}$, the parity of $\varphi$ is irrelevant. When maximising $\epsilon$ it follows that the maximum possible $\epsilon$ of any $k$-connected sequence is $\epsilon={\varphi \choose 2}$ which is contained in the associated pair of $s=\underbrace{\{\varphi-1,...,\varphi-1\}}_{\varphi}$, the degree sequence of the complete graph $K_{\varphi}$. \\

It remains to show that, for a fixed $\varphi$, all sequences with $\frac{k\varphi}{2} < \epsilon < {{\varphi \choose 2}}$ are realisable. Suppose that $\varphi$ and $k$ are not both odd and $k<\varphi -1$. To ensure that $\epsilon$ remains even, two terms in $\{k,...,k\}$ must be incremented by one to give $\{k+1,k+1,k,...,k\}$. Given that $\{k,...,k\}$ is realisable as a $k$-regular graph $G$ then $\{k+1,k+1,k,...,k\}$ is realisable as $G\cup ab$ where $ab\in \overline{G}$. This incrementing of two terms in a graphic sequence can be continued until the sequence with $\epsilon = {{\varphi \choose 2}}$ is reached and this process is summarised in {\sc Figure} \ref{table1}. 

 \begin{figure}[h]
{\renewcommand{\arraystretch}{1.5} 
\[ \begin{array}{l|c|c} 
 s=\{s_1,\dots, s_n\} &   \epsilon & |E_G| \\ \hline
 \{k,\dots,k\}  &   \frac{k\varphi}{2} &   \frac{kn}{2}   \\
 \{k+1,k+1,k,\dots,k\}  &   \frac{k\varphi +2}{2} &   \frac{kn}{2} +1   \\
  \hspace{0.5cm}\vdots  &    \vdots &    \vdots \\
  \{n-1,\dots,n-1\} &  {\varphi \choose 2} &  {n \choose 2}  \\
\end{array}  \] }

\caption{$k$-connected degree sequences when $\varphi$ and $k$ are not both odd.}
\label{table1}
\end{figure}

A similar argument exists when $\varphi$ and $k$ are both odd, with $k<\varphi -1$, and this argument is summarised in {\sc Figure} \ref{table2}. \\
 
 \begin{figure}[h]
{\renewcommand{\arraystretch}{1.5} 
\[ \scalebox{0.95}{$\begin{array}{l|c|c} 
 s=\{s_1,\dots, s_n\} &   \epsilon & |E_G| \\ \hline
 \{k+1,k,\dots,k\}  &   \frac{k\varphi+1}{2} &   \frac{kn+1}{2}   \\
 \{k+2,k+1,k,\dots,k\},\;  \{k+1,k+1,k+1,k,\dots,k\}  &   \frac{(k\varphi+1)+2}{2} &   \frac{kn+1}{2} +1   \\
  \hspace{0.5cm}\vdots  &    \vdots &    \vdots \\
  \{n-1,\dots,n-1\} &  {\varphi \choose 2} &  {n \choose 2}  \\
\end{array} $} \] }

\caption{$k$-connected degree sequences when $\varphi$ and $k$ are both odd.}
\label{table2}
\end{figure}

($\Leftarrow$) Suppose that $s=\{s_1,...,s_n\}$ is $k$-connected. This means that $s$ is the degree sequence of a $k$-connected graph $G$, hence $\sum\limits_{i=1}^{n}deg(v_i)=2|V_G|$ and so $\epsilon\in\mathbb{N}$. As $G$ is $k$-connected then $deg(v_i)\geq k$ for all $i=1,...,n$ hence if $G$ is a minimally $k$-connected graph on $n$ vertices then $d(G)=\{k,...,k\}$ with $|E_{G}|=\frac{kn}{2}$ if $n$ and $k$ are not both odd or $d(G)=\{k+1,k,...,k\}$ with $|E_{G}|=\frac{kn+1}{2}$ if $n$ and $k$ are both odd, hence $s_n\geq k$ and $\epsilon \geq \frac{k\varphi}{2}$. As $G$ is simple then $deg(v_i)\leq n-1$ for all $i=1,...,n$ and the maximal simple ($k$-connected) graph on $n$ vertices is the complete graph $K_n$ which has the degree sequence $\{n-1,...,n-1\}$ and $|E_{K_n}|={n \choose 2}$, hence $s_1\leq n-1$ and $\epsilon \leq {\varphi \choose 2}$. To show that $k$-connected graphs exist for each $|E_G|\in \mathbb{N}$ such that $\frac{kn}{2} < |E_G| < {{n \choose 2}}$ refer to the argument showing the existence of graphic sequences with $\frac{k\varphi}{2} < \epsilon < {{\varphi \choose 2}}$ detailed above, along with {\sc Figures} \ref{table1} and \ref{table2}. 
\end{proof}

\begin{thm}\label{Crry1} Given a sequence $s=\{s_1,...,s_n\}$ of positive integers, with the associated pair $(\varphi, \epsilon)$, such that $s_i\geq s_{i+1}$ for $i=1,...,n-1$ then $s$ is necessarily $k$-connected if and only if $s$ is $k$-connected and 
$\epsilon > {{\varphi -2 \choose 2}} + 2k -1.$
\end{thm}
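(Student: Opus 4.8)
The plan is to reduce both directions to a single extremal fact: among all simple graphs on $\varphi$ vertices with minimum degree at least $k$ that fail to be $k$-connected, the maximum number of edges is exactly $\binom{\varphi-2}{2}+2k-1$, attained by the graph $H$ obtained from three disjoint complete graphs $K_{k-1}$, $K_2$ and $K_{\varphi-k-1}$ by adding every edge from each vertex of $K_{k-1}$ to every vertex of the other two (so that the $K_{k-1}$ is a cut set of size $k-1$). First I would establish this bound. Since such a $G$ is not $k$-connected, Menger's Theorem supplies a vertex cut of size at most $k-1$; enlarging it if necessary (which only raises the edge count) we obtain a cut $S$ with $|S|=k-1$ separating $V_G\setminus S$ into nonempty parts $A$ and $B$ with no edge between them. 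Writing $a=|A|$, $b=|B|$, so $a+b=\varphi-k+1$, the edge count is at most $\binom{k-1}{2}+(k-1)(a+b)+\binom{a}{2}+\binom{b}{2}$, with equality exactly when $S\cup A$ and $S\cup B$ induce complete graphs. The hypothesis $s_n\ge k$ forces $a\ge 2$ and $b\ge 2$ (a vertex of $A$ has degree at most $a-1+(k-1)$), and for fixed sum $\binom{a}{2}+\binom{b}{2}$ is maximised at the extreme admissible split $\{a,b\}=\{2,\varphi-k-1\}$; substituting and simplifying yields $\binom{\varphi-2}{2}+2k-1$.

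For the direction ($\Leftarrow$), suppose $s$ is $k$-connected with $\epsilon>\binom{\varphi-2}{2}+2k-1$, and suppose toward a contradiction that some realisation $G$ of $s$ is not $k$-connected. By Theorem \ref{Main} we have $s_n\ge k$, so $G$ has minimum degree at least $k$; the extremal bound then gives $\epsilon=|E_G|\le\binom{\varphi-2}{2}+2k-1$, contradicting the hypothesis. Hence every realisation of $s$ is $k$-connected, i.e.\ $s$ is necessarily $k$-connected.

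For the direction ($\Rightarrow$), I would argue the contrapositive: assuming $s$ is $k$-connected but $\epsilon\le\binom{\varphi-2}{2}+2k-1$ (and $\varphi\ge k+3$, so that the extremal configuration exists), I would produce a realisation of $s$ that is not $k$-connected, showing $s$ is not necessarily $k$-connected. The construction designates the $k-1$ largest terms of $s$ as a prospective cut set $S$ and then places the remaining degrees inside two blocks $S\cup A$ and $S\cup B$ with no edge across, starting from the extremal graph $H$ above and redistributing edges downward—mirroring the incrementing process of {\sc Figures} \ref{table1} and \ref{table2} run in reverse—until the prescribed degree sequence is met while the cut $S$ is preserved.

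The main obstacle is precisely this last step: verifying that an arbitrary $k$-connected sequence with $\epsilon$ below the threshold genuinely splits into two graphic pieces sharing the cut $S$, so that a realisation with a $(k-1)$-cut exists. This is a degree-sequence realisability problem, handled by an Erdős--Gallai / degree-splitting argument, and is where the care is concentrated, especially in accounting for parity and for the boundary sequences; by contrast the extremal computation and the ($\Leftarrow$) direction are routine once the bound is in hand.
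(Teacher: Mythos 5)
Your extremal bound and your ($\Leftarrow$) direction are essentially the paper's own argument: the paper also reduces to a cut set $C$ of $k-1$ vertices, notes that an edge-maximal non-$k$-connected graph is a union of two complete graphs overlapping in $K_{k-1}$, uses the minimum-degree condition coming from $s_n\geq k$ to rule out a block of size $1$, and optimises the split to $\{2,\varphi-k-1\}$ (the paper minimises $ab$ where you maximise $\binom{a}{2}+\binom{b}{2}$; for fixed $a+b$ these are the same optimisation). That part of your proposal is fine.

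The genuine gap is in your ($\Rightarrow$) direction, and you have named it yourself: the degree-splitting step is never carried out, so what you have there is a plan, not a proof. Worse, that step cannot be carried out, because the universal statement it is meant to establish --- every $k$-connected sequence with $\epsilon\leq\binom{\varphi-2}{2}+2k-1$ admits a realisation that is not $k$-connected --- is false. Take $k=1$, $n=5$, $s=\{4,1,1,1,1\}$: then $\epsilon=4=\binom{3}{2}+2\cdot 1-1$, the sequence is $1$-connected, and the star $K_{1,4}$ is its \emph{unique} realisation (the degree-$4$ vertex must be joined to all others, which exhausts every remaining degree), so $s$ is necessarily $1$-connected despite failing the strict inequality; any star sequence $\{n-1,1,\dots,1\}$ with $n\geq 5$ behaves the same way. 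Your construction breaks down exactly where you predicted care was needed: with $S$ the $k-1=0$ largest terms, the degree-$4$ vertex cannot be housed in either block of a disconnected realisation, i.e.\ the two prospective pieces are never both graphic. Note that the paper does not hit this wall only because its ($\Rightarrow$) argument proves something strictly weaker than necessity: it exhibits a \emph{single} threshold sequence $s'$ realised both by the non-$k$-connected $G_1$ and by the $k$-connected $G_2$, which shows that one particular sequence with $\epsilon=\binom{\varphi-2}{2}+2k-1$ fails to be necessarily $k$-connected, but says nothing about all the other sequences with $\epsilon$ at or below the threshold. Your reading of what ($\Rightarrow$) logically requires is the correct one, and pursuing it honestly exposes that this direction of the theorem, as stated, is false; the gap in your proposal is therefore not fillable without adding hypotheses to the statement.
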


\begin{proof}  ($\Rightarrow$) Clearly it is necessary for $s$ to be $k$-connected if it is to be necessarily $k$-connected. It is required to show that it is necessary for $ \epsilon > {{\varphi -2 \choose 2}} + 2k -1$. Consider a sequence $s=\{s_1,...,s_n\}$ such that $ \epsilon = {{\varphi -2 \choose 2}} + 2k -1.$ \\

Observe that one such sequence is $s'=\{\underbrace{n-1,...,n-1}_{k-1},\underbrace{n-3,...,n-3}_{n-k-1},\underbrace{k,k}_{2}\}$ where $\varphi(s')=k-1+n-k-1+2=n$ and $\epsilon(s')= \frac{(k-1)(n-1)+(n-k-1)(n-3)+2k }{2}$ $=$  $ \frac{(n-k-1+k-1)(n-3)+2(k+k-1) }{2} = \frac{(n-2)(n-3)+2(2k-1) }{2} = {n-2 \choose 2} +2k -1$ . \\

\begin{figure}[h]
\begin{center}
\scalebox{0.9}{$\begin{xy}\POS (-2,5) *\cir<2pt>{} ="a" *+!D{v_n},
 (-2,-5) *\cir<2pt>{} ="c" *+!U{v_{n-1}},
 (22,5) *\cir<2pt>{} ="b" *+!D{v_i},
(22,-5) *\cir<2pt>{} ="d" *+!U{v_j},
  (13,15)*+!{G_1},
   (-10,11)*+!{K_{k+1}},
    (35,11)*+!{K_{n-2}},
  (30,0)*+!{K_{n-k-1}},
   (10,0)*+!{K_{k-1}},
   (-7,0)*+!{K_{2}},
  
\POS "a" \ar@{-}  "c",
\POS "b" \ar@{-}  "d",

\POS(20,0),  {\ellipse(18,12)<>{}},
\POS(3,0),  {\ellipse(15,12)<>{}},

\POS\POS (68,5) *\cir<2pt>{} ="a" *+!D{v_n},
 (68,-5) *\cir<2pt>{} ="c" *+!U{v_{n-1}},
 (92,5) *\cir<2pt>{} ="b" *+!D{v_i},
(92,-5) *\cir<2pt>{} ="d" *+!U{v_j},
  (83,15)*+!{G_2},
  
\POS "a" \ar@{-}  "b",
\POS "c" \ar@{-}  "d",

\POS(90,0),  {\ellipse(18,12)<>{}},
\POS(73,0),  {\ellipse(15,12)<>{}},

 \end{xy}$}

\caption{ For $k\geq 1$ and $s'=\{n-1,...,n-1,n-3,...,n-3,k,k \}=d(G_1)=d(G_2)$.}
\label{graphs}
\end{center}
\end{figure}
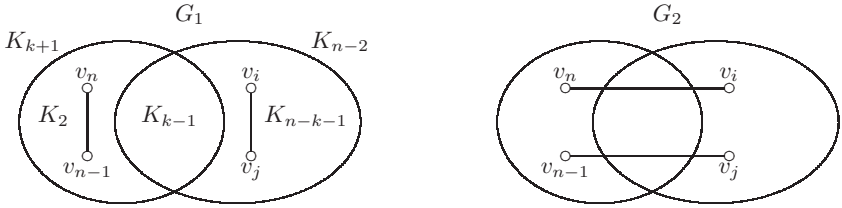

 Observe that $s'=d(G_1)$, see {\sc Figure} \ref{graphs}, where $G_1 = H_1\cup H_2$ with $H_1\simeq K_{k+1}$, $H_2\simeq K_{n-2}$ and $H_1\cap H_2 \simeq K_{k-1}$. Note that as $G_1\setminus (H_1\cap H_2)$ is disconnected and that $H_1\cap H_2 \simeq K_{k-1}$ (i.e. $|V_{H_1\cap H_2}|=k-1$) then $G_1$ is $(k-1)$-connected. \\ 

However, $s'=\{\underbrace{n-1,...,n-1}_{k-1},\underbrace{n-3,...,n-3}_{n-k-1},\underbrace{k,k}_{2}\}$ is, in fact, $k$-connected as $s'$ is also the degree sequence of $G_2$, see {\sc Figure} \ref{graphs} (noting that $v_iv_j\in E_{G_1}$ but $v_iv_j\not\in E_{G_2}$). Therefore, it is required that $ \epsilon > {n-2 \choose 2} +2k -1$ if $s$ is to be necessarily $k$-connected.  \\

Note that {\sc Figures} \ref{graphs1}, \ref{graphs2} and \ref{graphs3} are, respectively, the $k=1,2$ and $3$ versions of {\sc Figure} \ref{graphs}.

\begin{figure}[h]
\begin{center}
\scalebox{0.9}{$\begin{xy}\POS (-2,5) *\cir<2pt>{} ="a" *+!D{v_n},
(10.5,5) *\cir<2pt>{} ="b" *+!DR{v_i},
 (-2,-5) *\cir<2pt>{} ="c" *+!U{v_{n-1}},
(10.5,-5) *\cir<2pt>{} ="d" *+!UR{v_j},
  (-10,12)*+!{G_1},
  (19,0)*+!{K_{n-2}},
   (-7,0)*+!{K_{2}},
  
\POS "a" \ar@{-}  "c",
\POS "b" \ar@{-}  "d",

\POS(20,0),  {\ellipse(12,8)<>{}},

\POS (70,5) *\cir<2pt>{} ="a" *+!DR{v_n},
(80.5,5) *\cir<2pt>{} ="b" *+!DR{v_i},
 (70,-5) *\cir<2pt>{} ="c" *+!UR{v_{n-1}},
(80.5,-5) *\cir<2pt>{} ="d" *+!UR{v_j},
  (60,12)*+!{G_2},
  
\POS "a" \ar@{-}  "b",
\POS "c" \ar@{-}  "d",

\POS(90,0),  {\ellipse(12,8)<>{}},

 \end{xy}$}

\caption{ $k=1$ and $s'=\{n-3,...,n-3,1,1\}=d(G_1)=d(G_2)$. }
\label{graphs1}
\end{center}
\end{figure}
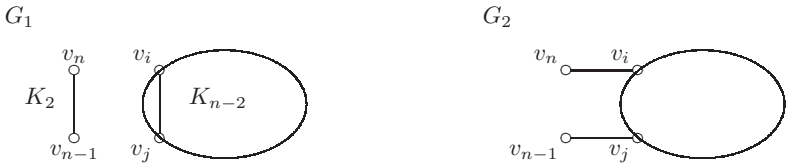


\begin{figure}[h]
\begin{center}
\scalebox{0.9}{$\begin{xy}\POS (-3,5) *\cir<2pt>{} ="a" *+!D{v_n},
(29.5,5) *\cir<2pt>{} ="b" *+!DL{v_i},
 (8,0) *\cir<2pt>{} ="e" *+!L{v_{1}},
 (-3,-5) *\cir<2pt>{} ="c" *+!U{v_{n-1}},
(29.5,-5) *\cir<2pt>{} ="d" *+!UL{v_j},
  (-10,12)*+!{G_1},
  (20,-1)*+!{K_{n-2}},
   (-7,-1)*+!{K_{3}},
  
\POS "a" \ar@{-}  "c",
\POS "b" \ar@{-}  "d",
\POS "a" \ar@{-}  "e",
\POS "c" \ar@{-}  "e",

\POS(20,0),  {\ellipse(12,8)<>{}},

\POS (67,5) *\cir<2pt>{} ="a" *+!DR{v_n},
(99.5,5) *\cir<2pt>{} ="b" *+!DL{v_i},
 (78,0) *\cir<2pt>{} ="e" *+!L{v_{1}},
 (67,-5) *\cir<2pt>{} ="c" *+!UR{v_{n-1}},
(99.5,-5) *\cir<2pt>{} ="d" *+!UL{v_j},
  (60,12)*+!{G_2},
  
\POS "d" \ar@{-}  "c",
\POS "b" \ar@{-}  "a",
\POS "a" \ar@{-}  "e",
\POS "c" \ar@{-}  "e",

\POS(90,0),  {\ellipse(12,8)<>{}},

 \end{xy}$}

\caption{ $k=2$ and $s'=\{n-1,n-3,...,n-3,2,2\}=d(G_1)=d(G_2)$. }
\label{graphs2}
\end{center}
\end{figure}
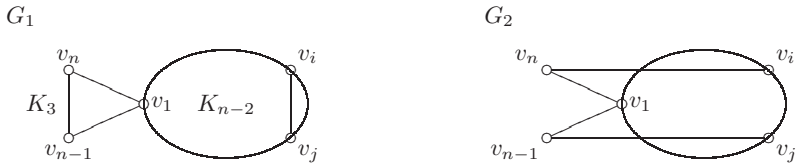


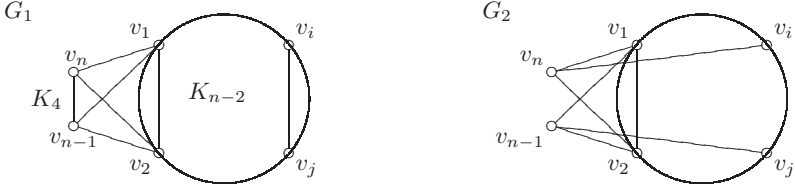
\begin{figure}[h]
\begin{center}
\scalebox{0.9}{$\begin{xy}\POS (-2,4) *\cir<2pt>{} ="a" *+!D{\;v_n},
 (-2,-4) *\cir<2pt>{} ="b" *+!U{v_{n-1}},
(10.5,8) *\cir<2pt>{} ="c" *+!DR{v_1},
(10.5,-8) *\cir<2pt>{} ="d" *+!UR{v_2},
 (29.5,8) *\cir<2pt>{} ="e" *+!DL{v_i},
(29.5,-8) *\cir<2pt>{} ="f" *+!UL{v_j},
  (-10,12)*+!{G_1},
  (19,0)*+!{K_{n-2}},
   (-6,-1)*+!{K_{4}},
  
\POS "a" \ar@{-}  "b",
\POS "a" \ar@{-}  "c",
\POS "a" \ar@{-}  "d",
\POS "b" \ar@{-}  "c",
\POS "b" \ar@{-}  "d",
\POS "c" \ar@{-}  "d",
\POS "e" \ar@{-}  "f",

\POS(20,0),  {\ellipse(12.5,12.5)<>{}},

\POS (68,4) *\cir<2pt>{} ="a" *+!DR{\;v_n},
 (68,-4) *\cir<2pt>{} ="b" *+!UR{v_{n-1}},
(80.5,8) *\cir<2pt>{} ="c" *+!DR{v_1},
(80.5,-8) *\cir<2pt>{} ="d" *+!UR{v_2},
 (99.5,8) *\cir<2pt>{} ="e" *+!DL{v_i},
(99.5,-8) *\cir<2pt>{} ="f" *+!UL{v_j},
  (60,12)*+!{G_2},
  
\POS "a" \ar@{-}  "e",
\POS "b" \ar@{-}  "f",
\POS "a" \ar@{-}  "c",
\POS "a" \ar@{-}  "d",
\POS "b" \ar@{-}  "c",
\POS "b" \ar@{-}  "d",
\POS "c" \ar@{-}  "d",

\POS(90,0),  {\ellipse(12.5,12.5)<>{}},

 \end{xy}$}

\caption{ $k=3$ and $s'=\{n-1,n-1,n-3,...,n-3,3,3\}=d(G_1)=d(G_2)$. }
\label{graphs3}
\end{center}
\end{figure}

($\Leftarrow$) It is now required to show that if $s$ is $k$-connected and $ \epsilon > {n-2 \choose 2} +2k -1$ then $s$ is necessarily $k$-connected. To show this it is required to show that the maximum number of edges in a graph with $n$ vertices which is not $k$-connected is ${n-2 \choose 2} +2k -1$. The graph $G_1$ in {\sc Figure} \ref{graphs} shows that such a graph exists, so it remains to show that a graph with $ \epsilon = {n-2 \choose 2} +2k -1$ is maximally $(k-1)$-connected i.e. adding one edge will always result in a $k$-connected graph. \\

Observe that any maximally $(k-1)$-connected graph $G$ on $n$ vertices will necessarily contain a cut set $C$ containing $k-1$ vertices. This means that $G\setminus C$ is disconnected. To maximise the number of edges in $G$ it is clear that $G\setminus C$ contains two connected components i.e. $G=H_1 \cup H_2$ where $V_{H_1}\cap V_{H_2}=C$ with $H_1\simeq K_{a+|C|}$, $H_2\simeq K_{b+|C|}$ and $H_1\cap H_2\simeq K_{|C|}$ (noting that $a+b=n-|C|$). So, the task of maximising $|E_{H_1\setminus C}|+|E_{H_2\setminus C}|$ is equivalent to minimising the number of edges in a complete bipartite graph $K_{a,b}$ as $K_{n}\setminus (E_{H_1\cup H_2}) \simeq K_{a,b}$. \\

Let $a+b=n-|C|$, with $a\leq b$, then $|E_{K_{a,b}}|=ab$ where $a,b\in\{1,...,n-|C|-1\}$. Note that $a>0$ as $G\setminus C$ is disconnected i.e. $K_a\neq K_0=(\varnothing, \varnothing)$. It is straightforward to show that $ab$ attains its maximum at $a=b=\frac{n-|C|}{2}$, when $n$ is even, and at $a=\lfloor\frac{n-|C|}{2}\rfloor, b=\lceil\frac{n-|C|}{2}\rceil$ when $n$ is odd. It follows that $ab$ is minimised when $a=1$ and $b=n-|C|-1$. However, observe that $a>1$ as $a=1$ implies that $H_1\simeq K_{|C|+1}$ which means that $d(G)$ contains a term equal to $|C|=k-1$, but this contradicts the $s_n\geq k$ condition. Hence $|E_{K_{a,b}}|$, with $a+b=n-|C|$, is minimised when $a=2$ and $b=n-|C|-2$ and so any maximally $(k-1)$-connected graph on $n$ vertices is isomorphic to $H_1\cup H_2$ where $H_1\simeq K_{k+1}, H_2\simeq K_{n-k-1}$ and $H_1\cap H_2\simeq K_{k-1}$, see $G_1$ in {\sc Figure} \ref{graphs}. Notice that the union of $G_1$ and any edge in $\overline{G_1}$ results in a $k$-connected graph. 
\end{proof}

\begin{cor} All simple graphs with $n$ vertices and at least $\frac{n^2-5n +6+4k}{2}$ edges are $k$-connected.
\end{cor}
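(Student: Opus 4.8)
The plan is to derive this corollary directly from Theorem \ref{Crry1}, which characterises necessarily $k$-connected sequences. The key observation is that the bound $\frac{n^2-5n+6+4k}{2}$ should coincide with the threshold $\binom{\varphi-2}{2}+2k-1$ from that theorem, and the heart of the argument is a routine algebraic verification that these two quantities differ by exactly the right amount to convert the strict inequality $\epsilon > \binom{\varphi-2}{2}+2k-1$ into the edge-count statement. First I would expand $\binom{n-2}{2} = \frac{(n-2)(n-3)}{2} = \frac{n^2-5n+6}{2}$, so that $\binom{n-2}{2}+2k-1 = \frac{n^2-5n+6+4k-2}{2} = \frac{n^2-5n+4+4k}{2}$. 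Since edge counts are integers, the strict inequality $|E_G| > \frac{n^2-5n+4+4k}{2}$ is equivalent to $|E_G| \geq \frac{n^2-5n+4+4k}{2}+1 = \frac{n^2-5n+6+4k}{2}$, which is precisely the hypothesis of the corollary.

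With this identification in hand, the argument proceeds as follows. Let $G$ be any simple graph on $n$ vertices with $|E_G| \geq \frac{n^2-5n+6+4k}{2}$ edges, and let $s = d(G)$ be its degree sequence with associated pair $(\varphi,\epsilon)$, so that $\varphi = n$ and $\epsilon = |E_G|$. The computation above shows $\epsilon > \binom{\varphi-2}{2}+2k-1$. By the $(\Leftarrow)$ direction of Theorem \ref{Crry1}, any sequence satisfying this strict inequality that is $k$-connected is in fact necessarily $k$-connected; moreover the proof of that direction establishes the stronger structural fact that the maximum number of edges in any $n$-vertex graph that fails to be $k$-connected is exactly $\binom{n-2}{2}+2k-1$. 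Since $G$ has strictly more edges than this maximum, $G$ cannot fail to be $k$-connected, and therefore $G$ is $k$-connected.

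The only point requiring care is to invoke the correct part of Theorem \ref{Crry1}. Rather than routing the argument through the degree-sequence language (which would require first checking that $s$ is $k$-connected), the cleaner path is to appeal directly to the structural claim buried in the $(\Leftarrow)$ proof: every $n$-vertex graph with more than $\binom{n-2}{2}+2k-1$ edges is $k$-connected, because any graph that is at most $(k-1)$-connected has at most $\binom{n-2}{2}+2k-1$ edges (the extremal configuration being $G_1 \simeq K_{k+1}\cup K_{n-k-1}$ glued along $K_{k-1}$, as in {\sc Figure} \ref{graphs}). This edge-maximality statement is exactly what is needed and sidesteps any separate verification of $k$-connectedness of the degree sequence.

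I expect no genuine obstacle here; the corollary is essentially a restatement of the extremal bound established inside the proof of Theorem \ref{Crry1}, repackaged as an absolute edge count rather than a condition on $\epsilon$. The main thing to get right is the arithmetic converting the strict inequality on integers into the $\geq$ form of the stated bound, and making clear that the relevant content is the edge-maximality of the non-$k$-connected graphs rather than the degree-sequence characterisation per se.
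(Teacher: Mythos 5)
Your proposal reproduces the paper's own argument almost exactly: the paper likewise derives the corollary by quoting the extremal edge count $\binom{n-2}{2}+2k-1$ from Theorem \ref{Crry1} and adding one edge, so in terms of approach you and the paper are in step. However, the step you both rely on --- ``any graph on $n$ vertices that fails to be $k$-connected has at most $\binom{n-2}{2}+2k-1$ edges'' --- is not what the proof of Theorem \ref{Crry1} actually establishes, and it is false for arbitrary graphs. In that proof, the case $a=1$ (one side of the separation being a single vertex) is ruled out only by appealing to the condition $s_n\geq k$, i.e.\ minimum degree at least $k$, which holds there because the sequence is assumed to be $k$-connected. An arbitrary graph satisfying the corollary's edge hypothesis need not have minimum degree $k$, so the exclusion of $a=1$ is unavailable, and the true extremal non-$k$-connected graph is exactly the $a=1$ configuration: $K_{n-1}$ together with one extra vertex joined to $k-1$ vertices of the clique. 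That graph has $\binom{n-1}{2}+k-1$ edges, which is at least $\binom{n-2}{2}+2k=\frac{n^2-5n+6+4k}{2}$ whenever $n\geq k+3$, yet it is not $k$-connected, since deleting the $k-1$ neighbours of the low-degree vertex disconnects it.

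Concretely, the corollary itself fails: for $k=1$ and $n=5$ the stated bound is $5$, but $K_4$ plus an isolated vertex has $6$ edges and is disconnected. Note that your ``sidestep'' of the degree-sequence language is precisely where the trouble enters: the verification you sidestep (that the degree sequence has all terms at least $k$) is the very hypothesis that makes the extremal count $\binom{n-2}{2}+2k-1$ valid. The paper's own proof commits the same error, so your proposal is a faithful blind reconstruction of it; but a correct version of the statement needs either the additional hypothesis that the minimum degree is at least $k$ (under which the paper's extremal analysis goes through), or the larger edge threshold $\binom{n-1}{2}+k$, which is the correct bound for unrestricted simple graphs.
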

\begin{proof}  As shown in Theorem \ref{Crry1}, a maximally $(k-1)$-connected graph with $n$ vertices is isomorphic to the union of $H_1\simeq K_{k+1}$ and $H_2\simeq K_{n-2}$ where $H_1\cap H_2 \simeq K_{k-1}$ and such a graph has $|E_{K_{k+1}}|+|E_{K_{n-2}}|-|E_{K_{k-1}}| = {n -2 \choose 2} +{k+1 \choose 2} -{k-1 \choose 2}= {n -2 \choose 2} +\frac{(k+1)k}{2} -\frac{(k-1)(k-2)}{2}= {n -2 \choose 2} +\frac{4k-2}{2}$ edges. Hence, any simple graph with $n$ vertices and at least $\left({{n -2 \choose 2}} +2k-1\right)+1 = \frac{(n-2)(n-3)+4k}{2}$ edges is $k$-connected. 
\end{proof} 

Note that for all $n,k\in \mathbb{N}$, $n^2-5n$ is even and $n^2-5n+6+4k>0$, hence $\frac{n^2-5n+6+4k}{2}\in \mathbb{N}$.

\end{document}